\documentclass[12pt]{amsart}
\usepackage{float} 
\usepackage{amsmath,amsthm,amssymb,verbatim,stmaryrd,xcolor,microtype,graphicx,aliascnt,needspace,mathtools,booktabs}
\usepackage[shortlabels]{enumitem}
\usepackage[T1]{fontenc}
\usepackage[utf8]{inputenc}
\usepackage[english]{babel} 
\usepackage[top=3.5cm,bottom=3.55cm,left=3.5cm,right=3.5cm]{geometry}
\usepackage[bookmarksopen=true,bookmarksopenlevel=2,bookmarksdepth=2,linktoc=page,colorlinks,linkcolor={red!70!black},citecolor={red!80!black},urlcolor={blue!80!black},pdfpagemode=UseOutlines,pdfstartview={fitH}]{hyperref}
\usepackage{longtable}
\usepackage{bm}
\usepackage{tikz}
\usepackage{tikz-cd}
\usepackage{euscript}                             
\usepackage{bbold}

\usepackage[colorinlistoftodos,bordercolor=orange,backgroundcolor=orange!20,linecolor=orange,textsize=footnotesize]{todonotes} \makeatletter \providecommand \@dotsep{5} \def\listtodoname{List of Todos} \def\listoftodos{\@starttoc{tdo}\listtodoname} \makeatother 

\usepackage{newtxtext,newtxmath}                              

\allowdisplaybreaks 

\usepackage{etoolbox}
\makeatletter
\patchcmd{\@startsection}{\@afterindenttrue}{\@afterindentfalse}{}{}             
\patchcmd{\part}{\bfseries}{\bfseries\LARGE}{}{}
\patchcmd{\section}{\scshape}{\bfseries}{}{}\renewcommand{\@secnumfont}{\bfseries} 
\patchcmd{\@settitle}{\uppercasenonmath\@title}{\large}{}{}
\patchcmd{\@setauthors}{\MakeUppercase}{}{}{}
\addto{\captionsenglish}{} 
\addto{\captionsenglish}{} 
\addto{\captionsenglish}{} 
\makeatother

\usepackage{fancyhdr}

\addto\extrasenglish{}  
\theoremstyle{plain}
\newtheorem{thm}{Theorem}[section] 
\newaliascnt{lemma}{thm}\aliascntresetthe{lemma}
\newaliascnt{cor}{thm}\newtheorem{cor}[cor]{Corollary}\aliascntresetthe{cor}

\newaliascnt{claim}{thm}\aliascntresetthe{claim}
\newtheorem*{claim*}{Claim}
\newtheorem*{thm*}{Theorem}
\newtheorem*{lem*}{Lemma}
\newtheorem*{cor*}{Corollary}

\theoremstyle{definition}
\newaliascnt{df}{thm}\newtheorem{df}[df]{Definition}\aliascntresetthe{df}
\newaliascnt{rem}{thm}\newtheorem{rem}[rem]{Remark}\aliascntresetthe{rem}
\newaliascnt{ex}{thm}\newtheorem{ex}[ex]{Example}\aliascntresetthe{ex}
\newaliascnt{conj}{thm}\aliascntresetthe{conj}
\newaliascnt{problem}{thm}\aliascntresetthe{problem}

\newtheorem*{df*}{Definition}
\newtheorem*{ex*}{Example}
\newtheorem*{rem*}{Remark}

\DeclareRobustCommand{\gobblefour}[5]{}    

\DeclareSymbolFont{sfoperators}{OT1}{bch}{m}{n} \DeclareSymbolFontAlphabet{\mathsf}{sfoperators} \makeatletter\def\operator@font{\mathgroup\symsfoperators}\makeatother 

\DeclareMathOperator{\Hom}{Hom}

\DeclareMathOperator{\Spec}{Spec}
\DeclareMathOperator{\BAff}{BAff}

\DeclareMathOperator{\colim}{colim\,}

\DeclareMathOperator{\Rings}{{Rings}}

\DeclareMathOperator{\Bands}{{Bands}}

\DeclareMathOperator{\Bend}{{Bend}}
\DeclareMathOperator{\bend}{{bend}}
\newcommand{\A}{{\mathbb A}}

\newcommand{\R}{{\mathbb R}}

\newcommand{\T}{{\mathbb T}}

\newcommand{\cO}{{\mathcal O}}

\renewcommand{\phi}{{\varphi}}

\renewcommand{\max}{\textup{max}}

\renewcommand{\geq}{\geqslant}
\renewcommand{\leq}{\leqslant}

\renewcommand{\setminus}{\backslash}

\renewcommand{\bm}{}

\DeclareMathOperator{\Trop}{Trop}

\DeclareMathOperator{\sgn}{sgn}

\usepackage{adjustbox}
\usepackage{caption}
\usepackage[
  backend=bibtex,
  style=alphabetic,
  doi=false,
  url=false,
  eprint=true,
  giveninits=true,
  maxbibnames=99,
  maxcitenames=99,
  maxalphanames=99,
  minalphanames=99
]{biblatex}
\title{On Bands and Limit Theorems in Tropical Geometry\\[10pt]}

\author{Arne Kuhrs}
\address{\rm Arne Kuhrs, University of Paderborn, Germany}
\email{kuhrs@math.uni-paderborn.de}

\author{Alejandro Mart\'inez M\'endez}
\address{\rm Alejandro Mart\'inez M\'endez, University of Groningen, the Netherlands}
\email{a.m.m.martinez.mendez@rug.nl}

\author{Pedro Souza}
\address{\rm Pedro Souza, Goethe-Universität Frankfurt am Main, Germany}
\email{souza@math.uni-frankfurt.de}

\hypersetup{pdftitle={On bands and Limit Theorems},pdfauthor={Arne Kuhrs, Alejandro Mart\'inez M\'endez and Pedro Souza}}

\begin{document}

\maketitle

\begin{abstract}

We review the basic theory of bands and band schemes introduced by Baker-Jin-Lorscheid, which is an algebraic framework for tropicalization, analytification, and $\mathbb{F}_1$-geometry. For an affine scheme $X$ over a non-Archimedean valued field $k$, one can associate to every affine embedding $\iota$ of $X$ a naturally defined affine band scheme $Y_\iota$ whose rational points over the tropical band $\T$ recover the tropicalization $\Trop(X,\iota)$. We prove that $X$ is the limit of the $Y_\iota$ in the category of band schemes, thereby obtaining a scheme-theoretic enhancement of Payne's limit theorem. By taking $\T$-rational points, this recovers Payne's theorem for affine tropicalizations from the perspective of band scheme theory and the same method provides an analogous result in the real tropical setting.
\end{abstract}

\tableofcontents

\section{Introduction}
\subsection*{Bands} 
A \emph{band} is an algebraic object that generalizes a commutative ring, except that it does not have an addition operation. Instead, it consists of a (pointed) multiplicative monoid together with a separate object called the \textit{null set} of the band, which captures its additive structure. Bands generalize not only rings, but also hyperrings, partial fields and other ``exotic'' algebraic structures. They were first introduced in \cite{https://doi.org/10.1112/jlms.70125}, with the goal of providing new algebraic foundations to $\mathbb{F}_{1}$-geometry and tropical geometry. The category of bands is large enough to admit fully faithful embeddings from various categories of algebraic objects, like the ones mentioned above, and is a well-behaved category in many ways (for example, it is complete and cocomplete). Bands build the algebraic objects that give rise to a corresponding geometric theory of band schemes. 
\newline




The notion of a \textit{tract} (introduced in \cite{BakerBowler2019Matroids}, see \ref{rem: Tracts}) is a direct precursor of the notion of a band and was developed to study various types of matroids (usual, oriented, valuated, etc) in a unified framework. Tracts and related algebraic structures also appear naturally in tropical geometry, where tropical varieties may be viewed as ``algebraic varieties over $\mathbb{T}$'' where $\T$ is a realization of the tropical semiring into an adequate algebraic setting. From the perspective of \cite{Giansiracusa2022UniversalTropicalization}, the object $\mathbb{T}$ is considered as a semiring where tropical varieties are defined via the bend relations (see also \cite[Lecture~2]{giansiracusa2026lecturestropicalalgebra}). In the language of bands, it is precisely the \emph{tropical band} (Definition~\ref{thm:exampletropical}) \cite[Ex.1.2.3]{https://doi.org/10.1112/jlms.70125}. These descriptions are not wholly independent, the relationship is detailed in \ref{rem: Bend relations}. This point of view will reappear in Section~\ref{Section: Limitsoftrops}, where the band schemes $Y_\iota$ attached to embeddings are constructed.


\subsection*{Tropicalization} 
Let $X$ be an affine variety over a non-Archimedean valued field $k$. 
The \textit{tropicalization} of $X$ is a polyhedral complex $\operatorname{Trop}(X,\iota)\subseteq\mathbb{R}^{n}$ whose definition depends on a choice of closed embedding $\iota \colon X \hookrightarrow \mathbb{A}^{n}_k$ of $X$ into some affine space $\mathbb{A}^{n}_{k}$. The point of this construction is to produce a combinatorial object which is simpler than the original variety $X$ but still carries enough information to study various questions about $X$. There are many cases where this strategy has produced positive results and we refer the interested reader to \cite[Chapter 1]{MaclaganSturmfels2015} for many applications of tropical methods in algebraic geometry and other fields.
\newline

A basic feature of tropicalization is that it depends on the choice of embedding $\iota$. Since tropicalization is functorial with respect to equivariant morphisms, it is natural to consider the inverse limit over all affine embeddings. In the affine case, Payne's theorem \cite{P09} identifies this limit with the Berkovich analytification of $X$:
\[
X^{\operatorname{an}} \;\cong\; \lim_{\iota} \Trop(X,\iota).
\]
This theorem also holds in the case where $X$ admits quasiprojective toric embeddings and the limit is taken over all such embeddings \cite{P09}. An analogous picture holds in the real setting, i.e., if $X$ is a semialgebraic set and $X^{\operatorname{an}}_r$ denotes its real analytification \cite{JSY22}, then
\[
X^{\operatorname{an}}_r 
\;\cong\; 
\lim_{\iota} \Trop_{r}(X,\iota),
\]
where $\Trop_{r}(X,\iota)$ denotes the real tropicalization of $X$ with respect to the embedding $\iota$.

\subsection*{Limit theorems via bands}

The preceding ideas come together in a natural way in the language of bands. In the non-Archimedean setting, the tropical band $\mathbb{T}$ plays the role of the tropical base object. When the tropical semifield is realized as the tropical hyperfield, one has $X^{\operatorname{an}} \cong X(\mathbb{T})$ for affine $k$-schemes $X$ (see \cite{Jun2021GeometryOfHyperfields}). Likewise, in the real setting for an affine $k$-scheme $X$ where $k$ is a real closed field, one has $X_r^{\operatorname{an}} \cong X(\mathbb{R}\mathbb{T}),$
where $\mathbb{R}\mathbb{T}$ denotes the real tropical hyperfield.




If $X=\Spec R$ is an affine $k$-scheme and $\iota$ is an affine embedding of $X$, then one can associate to $\iota$ an affine band scheme $Y_\iota$ such that $Y_\iota(\mathbb{T}) \cong \Trop(X,\iota).$
Thus the usual tropicalization map is induced by a morphism of affine band schemes after passing to $\mathbb{T}$-points. In this sense, tropicalization admits a scheme-theoretic incarnation in the category of band schemes.

A natural question is therefore whether Payne-type limit theorems admit a conceptual formulation in this framework of band schemes. More precisely, one may ask whether the inverse limit of the band schemes $Y_\iota$, taken over the natural morphisms induced by affine embeddings, recovers $X$, and whether passing to $\mathbb{T}$-points recovers the classical limit theorem. The aim of this note is to show that this is indeed the case in the affine setting. We also explain how the same formalism recovers the real tropical limit theorem of \cite{JSY22}. The more general case of toric embeddings and non-affine schemes will be treated in subsequent work.



This note is organized as follows. We first give a short introductory survey of the theory of bands, including examples (Section~\ref{Section: Bands}) and the construction of band schemes (Section~\ref{Section: Band schemes}). In Section~\ref{Section: Limitsoftrops}, we show how Payne's limit theorem for tropicalizations of affine varieties \cite{P09} can be formulated and generalized within this framework, and how the same formalism recovers the real tropical limit theorems of \cite{JSY22}.

\subsection*{Acknowledgements}

This project began during a visit of Oliver Lorscheid and Alejandro Mart\'inez M\'endez to Frankfurt, where we discussed recent developments on bands and limit theorems for tropicalizations over various hyperfields. Part of this work was carried out during the Workshop \emph{Geometry over Semirings}, held at the Universitat Aut\`onoma de Barcelona in July 2025. We are grateful to the organizers, Marc Masdeu and Joaquim Ro\'e, for creating a friendly and inspiring atmosphere to work. We thank Oliver Lorscheid, Martin Ulirsch, Kevin Kühn and Jeffrey Giansiracusa for helpful conversations, suggestions, and comments. 
Special thanks are due to Oliver for answering many detailed questions,
several of which will be taken up in a follow-up paper, and to Oliver and Martin for comments on an earlier draft.

This project has received funding from the Deutsche Forschungsgemeinschaft (DFG, German Research Foundation) TRR 326 \emph{Geometry and Arithmetic of Uniformized Structures}, project number 444845124, and TRR 358 \emph{Integral Structures in Geometry and Representation Theory}, project number 491392403, as well as from the Deutsche Forschungsgemeinschaft (DFG, German Research Foundation) Sachbeihilfe \emph{From Riemann surfaces to tropical curves (and back again)}, project number 456557832 and the DFG Sachbeihilfe \emph{Rethinking tropical linear algebra: Buildings, bimatroids, and applications}, project number 539867663, within the
SPP 2458 \emph{Combinatorial Synergies} and from the \emph{Secretaría de Ciencia, Humanidades, Tecnología e Innovación} (SECIHTI, previously CONACyT).

\section{Bands}\label{Section: Bands}
Bands and band schemes were first developed in \cite{https://doi.org/10.1112/jlms.70125}. In this section, we briefly review the basic theory, with a focus on the connection to tropical geometry.

\subsection{Bands}

\begin{df}
    A \textit{pointed monoid} is a commutative monoid $B$ that has an absorbing element $0\in B$, i.e.,  $0\cdot b=0$ for all $b\in B$. A \textit{morphism of pointed monoids} is a morphism of monoids $f\colon B\rightarrow C$ such that $f(0)=0$. 
\end{df}

\begin{df}
    The \textit{free semiring} over a pointed monoid $B$ is the commutative semiring $\mathbb{N}[B]$ of finite formal sums of elements of $B$, with the empty sum as the neutral element. Let $\sim$ be the equivalence relation given by identifying $\sum a_i\sim 0_B+\sum a_i$ for all $\sum a_i\in \mathbb{N}[B]$. The quotient semiring $B^{+}\coloneqq\mathbb{N}[B]/\sim$ is called the \textit{ambient semiring} of $B$. 
\end{df}

\begin{df}\label{df: Ideals}
    An \emph{ideal} of a pointed monoid $B$ is a $B$-submodule $I$ of $B$, that is, a proper subset $I\subseteq B$ such that $BI=I$. Likewise, an ideal of an ambient semiring $J\subseteq B^{+}$ is a $B^{+}$-submodule: a proper additive sub-semigroup that absorbs the multiplication of $B^{+}$, i.e., such that $B^{+}J= J$ and $J+J=J$.
\end{df}

\begin{df}
    A \textit{band} is a pair $B=(B,N_B)$ consisting of a pointed monoid $B$ called the \emph{underlying monoid} together with an ideal $N_B\subseteq B^+$, called the \emph{null set}, satisfying the following property: for every $a\in B$, there exists a unique $b\in B$ such that $a+b\in N_B$, which we denote by $-a\coloneqq b$. An element $\sum b_i$ of the null set $N_B$ is called a \emph{null-sum}. A band $B$ is called a \emph{tract} if, for every non-zero $b\in B$, there exists $c\in B$ such that $bc=1$. 
 \end{df}

\begin{rem}\label{rem: Tracts}
The original definition of a tract in \cite{BakerBowler2019Matroids} is a pair $(G,N_G)$ consisting of a commutative group $G$ along with a subset $N_G\subseteq \mathbb{N}[B]$ such that
\begin{itemize}
    \item $0_{\mathbb{N}[B]}\in N_G$
    \item  $1_G\notin N_G$ and there exists a unique $\epsilon\in G$ such that $1+\epsilon\in N_G$
    \item  $GN_G=N_G$ 

\end{itemize}
 The definition of tract in this text is slightly stronger than the original notion as it requires the null set to be an ideal in the ambient semiring, the cases which interest us fulfill precisely this condition. We shall consider tracts in this stronger sense.  
\end{rem}
 
 \begin{df}
     A morphism of bands $f:B\rightarrow C$ is a morphism of pointed monoids $f:B\rightarrow C$ such that $\sum f(b_i)\in N_C$ for every $\sum b_i\in N_B$. We denote by $\operatorname{Bands}$ the category of bands.
 \end{df}

    The category of bands is complete and cocomplete \cite[Cor. 1.43]{https://doi.org/10.1112/jlms.70125}. 

\subsection{Examples of bands and bandification}

\subsubsection{Bandification}
There are fully faithful embeddings  into the category $\operatorname{Bands}$ from  many algebraic categories \cite{https://doi.org/10.1112/jlms.70125}. The image of this embedding  will be called \textit{bandification} and the cases for rings and hyperrings will be described below.



\begin{ex}[Bandification of a ring]
Every ring $R$ can be naturally viewed as a band: the underlying monoid is $(R,\cdot)$ and the null set is
\begin{equation*}
N_R=\Bigl\{\sum a_i\in R^+\: :\: \sum a_{i}=0\in R\Bigr\}.
\end{equation*}
The bandification of $R$ is denoted as $\underline{R}:=(R,N_R)$ .
A ring homomorphism $R\to S$ induces a band morphism $\underline{R}\to \underline{S}$ and in this way we obtain a fully faithful functor $\Rings\to \Bands$. 
\end{ex}


\begin{ex}[Bandification of a hyperring] More generally, given a hyperring $(H,\cdot,\boxplus)$, its underlying monoid is $(H,\cdot)$ and the null set is

\begin{equation*}
N_{H}=\Bigl\{ \sum a_{i} \in H^{+} \:  : \:  0\in \boxplus a_{i}\subseteq H \Bigr\}
\end{equation*}

The bandification of $H$ is denoted as $\underline{H}:=(H,N_H)$. As in the previous case, a hyperring morphism induces a band morphism and we obtain a fully faithful functor $\operatorname{Hyperrings}\to\operatorname{Bands}$. 
\end{ex}
\subsubsection{Examples of bands}

\begin{ex}[The initial and terminal bands] The initial object in the category $\Bands$ is the \emph{regular partial field} which consists of the underlying monoid $\mathbb{F}_1^{\pm}:=\{0,1,-1\}$ (with the obvious multiplication) together with the null set
\begin{equation*}
    N_{\mathbb{F}_1^{\pm}}=\bigl\{n\cdot1+n \cdot (-1): n\in \mathbb{N}\bigr\}
\end{equation*}
The terminal object in $\Bands$ is the trivial band $\{\bm{0}\}$.\\

The Krasner band $\mathbb{K}:=\{0,1\}$ has the obvious underlying monoid $(\mathbb{K},\cdot)$ and null set
\begin{equation*}
N_{\mathbb{K}}=\bigl\{n \cdot 1 :\: n\in \mathbb{N}\setminus\{1\}\bigr\}
\end{equation*}
The Krasner band is the bandification of the Krasner hyperfield and is the terminal object in the category of tracts.
\end{ex}

\begin{ex}[The sign band]\label{thm:examplesign}
The \emph{sign band} $\mathbb{S}=\{0,1,-1\}$ has underlying monoid $(\mathbb{S},\cdot)$ with the obvious multiplication and null set
\begin{equation*}
N_{\mathbb{S}}=\bigl\{ n\cdot 1+m \cdot (-1): nm\neq 0\: \textup{or}\:n=m=0, \:\:\:n\in \mathbb{N}\bigr\}
\end{equation*}
This is the bandification of the sign hyperfield.

An ordering on a field $K$ is exactly the same as a band morphism $\underline{K} \to \mathbb{S}$ (see \cite[Example 2.23]{BakerBowler2019Matroids}).
\end{ex}

\begin{ex}[The tropical band]
\label{thm:exampletropical}
The \emph{tropical band} $\mathbb{T}$ has underlying monoid $(\R_{\geq 0},\cdot)$ and null set
\begin{equation*}
N_{\mathbb{T}}:=\{0\}\cup\Bigl\{\sum a_i\: : \:\operatorname{max}\{a_{i}\}\:\textup{occurs at least twice}\Bigr\}.    
\end{equation*}
Note that this is the bandification of the realization of the tropical semiring as a hyperfield (see, for example, \cite{viro2010hyperfieldstropicalgeometryi}) and that a valuation on a field $v:k\to \mathbb{R}_{\geq 0}$ is the same as a band morphism $\underline{v}:\underline{k} \to \T$. 
\end{ex}

\begin{ex}[The real-tropical band]\label{thm:examplerealtropical}
The \emph{real-tropical band} $\mathbb{R}\mathbb{T}$ has underlying monoid $(\R ,\cdot)$ and null set
\begin{equation*}
N_{\mathbb{R}\mathbb{T}}=\{0\}\cup\Bigl\{\sum a_{i}\: :\:  \max\{|a_i|\} \textup{ occurs at least twice and with opposite signs}\Bigr\}
\end{equation*}
This is the bandification of the real tropical hyperfield and can be seen as a \emph{tropical extension} of the sign hyperfield $\mathbb{S}$ by the ordered monoid (with reverse order) $(\mathbb{R}_{\geq 0},\times)$ \cite{smith2024matroidstropicalextensionstracts}.

Let $k$ be an ordered field with a compatible valuation, that is a valuation $\lvert \cdot\rvert:k\to\mathbb{R}$ such that for all $0 \le a \le b\in k$ then $ \lvert a \rvert_k \le \lvert b \rvert_k$. Then the signed absolute value $v^\pm:k\to \mathbb{R},\:\: x \mapsto \sgn(x) \vert x\vert_k$ is a morphism of bands $\underline{v^\pm}:\underline{k} \to \mathbb{R}\mathbb{T}$(see \cite[\protect\textsection~5]{JSY22})
\end{ex}

\begin{ex}[The triangle band] \label{ex:triangle}
The \emph{triangle band} \(\triangle\) has
\((\mathbb{R}_{\geq 0},\cdot)\) as its underlying monoid and null set
\[
N_{\triangle}
=
\{0\}
\cup
\left\{
\sum_i a_i \;:\;
\max_i\{a_i\}\leq \sum_{j\neq i} a_j
\right\}.
\]

Thus \(\sum_i a_i\in N_\triangle\) precisely when the numbers \(a_i\)
are the side lengths of a possibly degenerate $n$-gon in the Euclidean plane. \\

More generally, for \(q>0\), the \(q\)-triangle band \(\triangle_q\) has
underlying monoid \((\mathbb{R}_{\geq 0},\cdot)\) and null set
\[
N_{\triangle_q}
=
\{0\}
\cup
\left\{
\sum_i a_i \;:\;
\max_i\{a_i^{1/q}\}\leq \sum_{j\neq i} a_j^{1/q}
\right\}.
\]
The tropical band appears as the limiting
object \(\triangle_0\). Recently, a
surprising connection between Lorentzian polynomials in the sense of
Br\"and\'en and Huh and realization spaces over triangular bands has led
to new insights into the topology of these spaces (see
\cite{baker2025lorentzianpolynomialsmatroidstriangular}).
\end{ex}

\section{Band schemes}\label{Section: Band schemes}
In order to have a reasonable geometric theory, we want to associate to every band a topological space with a structure sheaf of bands, in analogy with classical affine schemes. To define the points of an \emph{affine band scheme}, we first discuss prime ideals of bands. For further details, we refer to \cite{https://doi.org/10.1112/jlms.70125}.

\begin{df}
    A multiplicative ideal, or \emph{$m$-ideal}, of a band B is a proper subset $I\subseteq B$ such that $B\cdot I\subseteq I$, that is, it is an ideal of the underlying monoid.  It is called \emph{prime} if $S=B\setminus I$ is a multiplicative set, (that is,  $1\in S$ and if $a,b\in S$ then $ab\in S$). 
 \end{df}

\begin{rem}
    This construction coincides with the notion of an ideal in $B$ being a  $B$-submodule in the category $\Bands$ by identifying the monoid ideal $I\subseteq B$ with the pair $(I,N_I=I^+\cap N_B)$, which fulfills both parts of Definition \ref{df: Ideals}.
\end{rem}
\begin{rem}
    It is easy to note that, given a band $B$, there is a unique maximal ideal $\mathfrak{m}=B\setminus B^\times$ where $B^\times=\{b\in B\: :\:  bc=1\:\textup{for some }\: c\in B\}.$
\end{rem}

\begin{ex} 
    Given a tract $k$, the unique prime $m$-ideal of $k$ is $\{0\}$ (this is the case when $k$ is the bandification of a field or hyperfield) and the prime $m$-ideals of $k[x_{i}]_{i\in I}$ (Definition \ref{def:free}) are of the form $\langle x_j \rangle_{j\in J}$ for some subset $J\subseteq I$.  
\end{ex}

\begin{ex}\label{ex:Primes of Z}
    The prime $m$-ideals of $\underline{\mathbb{Z}}$ are unions of prime ideals of $\mathbb{Z}$, and are therefore in bijection with sets of prime numbers. The unique maximal ideal is $\mathfrak{m}=\mathbb{Z}\setminus \{-1,1\}$.
\end{ex}

\begin{df}
 Let $S\subseteq B$ be a multiplicatively closed subset with $1\in S$. The \emph{localization} of $B$ in $S$ is the band $S^{-1}B$ with underlying monoid $(B\times S)/\sim$ (where $(a,s)\sim (a^{\prime},s^{\prime})$ if and only if there exists $t\in S$ such that $tsa'=ts'a$) and null set
 \begin{equation*}
     N_{S^{-1}B}= \Bigl\langle \sum \frac{a_i}{1} : a_i\in N_B\Bigr\rangle 
 \end{equation*}
 The localization map $\iota_S:B\hookrightarrow S^{-1}B$ is defined by $a\mapsto \frac{a}{1}$\\

 If $S=\{ 1,h,h^2,h^3,...\}$ for some $h\in B$ then we write $B[h^{-1}]:=S^{-1}B$ and if $S=B\setminus\mathfrak{p}$ for some prime $m$-ideal $\mathfrak{p}$ we write $B_{\mathfrak{p}}:=S^{-1}B$
\end{df}

 \begin{df}
The \emph{spectrum} of a band $B$ is defined as
\begin{equation*}
    \operatorname{Spec}(B)\coloneqq\{\mathfrak{p}\subset B\:\colon\:\mathfrak{p}\:\textup{prime $m$-ideal}\}
    \end{equation*}
    with the topology generated by \emph{principal open subsets}
    \begin{equation*}
     U_h=\{\mathfrak{p}\in \operatorname{Spec}B\::\: h\notin \mathfrak{p}\},
    \end{equation*}
    where $h\in B$. We define a structure sheaf $\mathcal{O}_X$  by setting
\[
\mathcal{O}_X(U_h) = B[h^{-1}]
\]
for all $h\in B$, which has stalks
\[
\mathcal{O}_{X,\mathfrak{p}} = B_\mathfrak{p}.
\]
\end{df} 



Note that \(\mathcal{O}_X\) is uniquely determined by its values on principal opens since
the \(U_h\) form a basis for the topology of \(\operatorname{Spec} B\). The proof of the
existence of \(\mathcal{O}_X\) is easier than in usual algebraic geometry since every
non-empty spectrum \(\operatorname{Spec} B\) contains a unique closed point, which is the
maximal ideal $\mathfrak{m}_B = B \setminus B^\times$ of \(B\). In other words, $\Spec(B)$ is non-empty for any non-trivial band $B$.  

\begin{df}
    An \emph{affine band scheme} is a topological space $X$ with a sheaf of bands $\mathcal{O}_X$ which is isomorphic to the spectrum of a band $(\Spec(B),\cO_{\Spec(B)})$
\end{df}


A band morphism \(f : B \to C\) induces a morphism
\[
\varphi = f^\ast : \operatorname{Spec} C \to \operatorname{Spec} B
\]
with \(\varphi(\mathfrak{q}) = f^{-1}(\mathfrak{q})\), and the sheaf morphism \(\varphi^\#\) is determined on
principal opens by
\[
\varphi^\#(U_h) : B[h^{-1}] \longrightarrow C[f(h)^{-1}], \qquad
\frac{a}{h^i} \longmapsto \frac{f(a)}{f(h)^i}.
\]
This yields a category $\BAff$ of affine band schemes.\\

\noindent Therefore, we obtain a  functor 
\[
 \begin{array}{cccl}
  \Spec: & \Bands & \longrightarrow & \BAff \\
         & B      & \longmapsto     & (X=\Spec(B),\cO_X)
 \end{array}
\]

\noindent which is fully faithful and, along with the \emph{global sections functor} $\Gamma:\BAff \to\Bands$ defines an antiequivalence of categories (see \cite{https://doi.org/10.1112/jlms.70125}). 



\begin{df}
    A \emph{band scheme} is a topological space $X$ with a sheaf of bands $\mathcal{O}_X$ such that every point has an affine open neighbourhood. Given a tract $k$, a $k$-band scheme is a band scheme $X$ along with a morphism $X\to\Spec(k)$. 
\end{df}

\begin{rem}
Contrary to intuition, $m$-ideals which are also additively closed on the null set, known in literature as $k$-ideals, are not the `correct' notion of ideals for a geometric theory: as stated in \cite[Theorem 2.2]{https://doi.org/10.1112/jlms.70125}, the (Grothendieck) topology with coverings generated by principal open immersions coincides with the topology given by prime $m$-ideals and thus, these correspond to the underlying points of a band scheme.
This means that the underlying topological space of the band spectrum of a ring has more points than the (classical) spectrum of the ring (for example, in \ref{ex:Primes of Z}, $\mathfrak{m}$ is not an ideal of $\mathbb{Z}$) and this contrasts to the situation for semiring schemes, which have the same underlying space as the usual spectrum in the case of a ring (see \cite{gualdi2026schemetheorycommutativesemirings}).

\end{rem}

\begin{df}\label{def:free}
Let $B$ be a band. The free $B$-band over an index set $I$ is defined as follows. Let $B[x_i]\coloneqq B[x_{i}: i\in I]$ be the monoid
\begin{equation*}
    B[x_{i}]=\bigl\{a x_{1}^{m_{1}}\dots x_{n}^{m_{n}}\::\:a\in B,\:\{m_{1},\dots,m_{n}\}\subseteq I\bigr\}
\end{equation*}
(with the obvious identification $0b\sim 0$ for all $b\in B[x_{i}]$) and the null set  is 
\begin{equation*}
    N_{B[x_i]}=\Bigl\langle \sum b_i\in B[x_i]^+ : \sum b_i\in N_B \Bigr\rangle.
\end{equation*}
\end{df}

\begin{df} \label{def:monomialbandmodel}
Let $R$ be a finitely generated $k$-algebra and let $a_1, \dots, a_n\in R$ be a set of generators for the $k$-algebra $R$. Note that this choice is equivalent to the choice of a closed embedding $\iota: X=\Spec(R) \hookrightarrow \mathbb{A}^n_k$.
The \emph{monomial band model} of $R$ with respect to $\iota$ is the
$\underline{k}$-subband $R_\iota\subseteq \underline{R}$ whose underlying
pointed monoid is the multiplicative submonoid of $R$ generated by  $a_1,\dots,a_n$, namely
$$R_\iota = \{c a^{e_1}_1\dots a^{e_n}_n : c\in k,\: e_i \in \mathbb{N} \}$$ and whose null set is induced from $\underline{R}$:
\[
N_{R_\iota}=N_{\underline{R}}\cap R_\iota^+.
\]
We call the associated  \(\underline{k}\)-band scheme  $Y_\iota = \Spec(R_\iota)$ the \emph{band scheme model of $R$ with respect to $\iota$}.
\end{df}

\begin{rem}
    Let $k$ be a field and let $\underline{k}$ be its bandification. Note that the bandification $\underline{k[x_{i}]_{i\in I}}$ of the polynomial ring $k[x_i]$ is not the same as $\underline{k}[x_{i}]$, the former is much larger: The underlying monoid of $\underline{k[x_{i}]}$ is the multiplicative monoid 
    $$\Bigl\{\sum_{a_J\in k} a_Jx^J\in k[x_{i}]_{i\in I}\::\:  J\subseteq I, \lvert J\rvert\leq\infty\Bigr\}$$
    and the underlying monoid of $\underline{k}[x_i]_{i\in I}$ is the multiplicative monoid of monomials as in Definition \ref{def:free}.
    
    There is, however, a canonical $k$-band morphism
    \begin{equation*}
        \underline{k}[x_{i}]\hookrightarrow\underline{k[x_{i}]}
    \end{equation*}
    and thus also a canonical morphism
    \begin{equation*}
    \operatorname{Spec}(\underline{k[x_{i}]})\to\operatorname{Spec}(\underline{k}[x_{i}]).
    \end{equation*}
In the same manner, if $R$ is a $k$-algebra as above, we have the canonical inclusion $R_\iota\hookrightarrow \underline{R}$ and the corresponding morphism of band schemes $\Spec\underline{R}\to Y_\iota$.
\end{rem}

\begin{df}
A \emph{topological band} is a band \(B\) together with a topology such that the multiplication map is continuous. Given a band $A$ and a topological band $B$, we define the \emph{affine topology} on $B(A):=\operatorname{Hom}(A,B)$ as the compact open topology with respect to the discrete topology on $A$. \noindent Equivalently, it is the initial topology for the evaluation maps (i.e. such that the maps $ev_a(f):\operatorname{Hom}(A,B)\to B$, $ev_a(f)=f(a)$  are continuous for all $a\in A$).\\

Given an arbitrary band scheme $X$ and a band $B$, the \emph{fine topology} on $X(B)=\Hom(\Spec(B),X)$ is the finest topology such that for every affine band scheme $Y$ and morphism $\alpha:Y\to X$ the map $\alpha_B:Y(B)\to X(B)$ is continuous with respect to the affine topology on $Y(B)$.
\end{df}

\begin{rem}
Let $k$ be a topological field. The \emph{strong topology} is the unique topology  on the $k$-rational points of $k$-schemes of finite type such that for all such schemes $X,Y$ and $Z$ the following properties hold:
\begin{itemize}
    \item $\mathbb{A}^1(k)\cong k$
    \item $(X\times_Z Y)(k)\cong X(k)\times_{Z(k)} Y(k)$
    \item An open (closed) immersion $Y\hookrightarrow X$ yields an open (closed) topological embedding $Y(k)\hookrightarrow X(k)$
\end{itemize}
For more details we refer to \cite[Section 1.10]{Mumford1999}.

    The fine topology, recovers the strong  topology  whenever $X$ is a $k$-scheme of finite type and $R$ is a topological $k$-algebra \cite[Thm. B]{LORSCHEID2016193}. 
\end{rem}

\begin{ex}\label{ex:topologicalbands}
The tropical band $\mathbb T = \mathbb R_{\ge 0}$,
endowed with the order topology \cite[Ex. 3.7]{https://doi.org/10.1112/jlms.70125}, and the signed tropical hyperfield $\R\T = \R$ endowed with the usual topology are topological bands. 
\end{ex}




\section{Limits of tropicalizations}\label{Section: Limitsoftrops}

In this section we explain how, for an affine scheme $X$ over a non-Archimedean valued field, both analytification and tropicalization can be expressed in terms of $\T$-points of affine band schemes. We then prove that the affine band scheme associated with $X$ is the inverse limit of the band schemes attached to its affine embeddings, and deduce the corresponding limit theorem on $B$-points for any topological band $B$. \\





Let $X=\operatorname{Spec}(R)$ be an affine $k$-scheme, where $R$ is a $k$-algebra. We start with the band scheme interpretation of the analytification of $X$, since such an interpretation is more direct. Recall from Example \ref{thm:exampletropical} that a non-Archimedean valuation $v$ on a field $k$ is the same as a morphism $v\colon k\to\mathbb{T}$, where $k$ and $\mathbb{T}$ are viewed as bands. To avoid confusion, when not explicitly stated, we write $\underline{k}$ to remind the reader that we are viewing $k$ as a band (similarly to algebras, etc).
\newline

If $X = \operatorname{Spec} R$ is an affine $k$-scheme of finite type,
considered as an affine band scheme over the band $k$, then
\[
  X(\mathbb{T}) = \operatorname{Hom}_{\underline{k}} (\underline{R},\mathbb{T})
\]
is homeomorphic to the Berkovich
analytification $X^{\operatorname{an}}$ of $X$ (see \cite[Thm. 3.5]{L22}, \cite[Exa. 3.8]{https://doi.org/10.1112/jlms.70125}).
\newline



Given a $k$-algebra $R$ with an associated embedding $\iota$, we have the canonical  morphism of affine $\underline{k}$-band schemes $\underline{X}=\Spec(\underline{R}) \longrightarrow Y_\iota.$ where $Y_\iota$ is the band scheme model of $R$ with respect to $\iota$ as in Definition \ref{def:monomialbandmodel}.
Passing to \(\mathbb{T}\)-points recovers the usual tropicalization map
\[
X^{\operatorname{an}} \cong X(\mathbb{T}) \longrightarrow Y_\iota(\mathbb{T}) \cong \Trop(X,\iota).
\]
where the latter homeomorphism naturally identifies \(Y_\iota(\mathbb{T})\) with the tropicalization $\Trop(X,\iota)$ of $X$ with respect to the closed embedding $\iota: X \hookrightarrow \A^n_k$ (see \cite[Exa. 3.8]{https://doi.org/10.1112/jlms.70125}).
Thus tropicalization appears as the map on \(\mathbb{T}\)-points induced by a natural morphism of affine band schemes.

In the real setting, if $X$ is an affine variety over a real closed field $k$, by an analogous argument \(Y_\iota(\mathbb{R}\mathbb{T})\) is naturally homeomorphic to the real tropicalization $\Trop_r(X,\iota)$ of $X$.
 
\begin{rem}\label{rem: Bend relations}
The Giansiracusa bend relations \cite{GiansiracusaGiansiracusa2016Equations} can be recovered from band theory. Given a ring $R=k[A]/I$ over non archimedean valued field $v:k\to\R_{\geq0}$ (and an implicit affine scheme embedding $\iota$) the \emph{bend relations} are $v(c_a)a+\sum v(c_{b_i})b_i\sim v(c_{b_i})b_i$, where $c_a,c_{b_i}\in k$, $a,b_i\in R$ and $c_aa-\sum c_{b_i}b_i\in I$. They generate a congruence relation $\bend(I)$ and therefore an idempotent semiring $\Bend(R):=\T[A]/\bend(I)$ (where $\T$ is interpreted here as the classical tropical semiring).
We can associate a band to $\Bend(R)$ in a natural way: $\underline{\Bend} (R)$ consists of monomials in $\T[A]$ and the null set is 
\[
N_{\underline{\Bend} (R)}:=\Bigl\langle v(r)a+\sum v(s_i)b_i\::\:a\otimes 1 + \sum (b_i\otimes 1)\in N_{\underline{R}\otimes\T} \Bigr\rangle
\]
Then, taking $X=\Spec(R)$, we have that  $\Trop(X,\iota)\cong \underline{\Bend}(R)(\T)$ \cite{GiansiracusaGiansiracusa2016Equations}.
\end{rem}
 
Consider now two closed embeddings $  \iota\colon X\hookrightarrow \mathbb{A}^n_k,  \jmath\colon X\hookrightarrow \mathbb{A}^m_k,$
and a toric morphism $  \phi : \A^n_k \to \A^m_k$
such that $\jmath = \phi \circ \iota$. Then each coordinate of $\jmath$ is a monomial in the coordinates of $\iota$, and this induces an inclusion of $\underline{k}$-bands $ R_\jmath \subset R_\iota.$
Equivalently, we obtain a morphism of affine $\underline{k}$-band schemes
\[
  \varphi^*_{\iota\jmath}:Y_\iota \longrightarrow Y_\jmath.
\]



\begin{thm}
\label{thm:mainthm}
Let $R$ be a finitely generated $k$-algebra and let $X = \Spec(\underline{R})$ be the associated affine $\underline{k}$-band scheme. Then there is a natural isomorphism of band schemes 
       $$X \cong \lim_\iota Y_\iota$$
       where the limit runs over all closed embeddings $\iota: X \hookrightarrow \A^{n(\iota)}_k$ and toric morphisms between them as above. \\
Moreover, for a $k$-band (resp. topological $k$-band) $B$, it induces a bijection (resp. homeomorphism) 
       $$X(B) \cong \lim_\iota Y_\iota(B).$$
\end{thm}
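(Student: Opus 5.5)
Since $\Spec$ and $\Gamma$ give an anti-equivalence between $\Bands$ and $\BAff$, and $\Bands$ is cocomplete, the plan is to work entirely on the side of bands. There the claim becomes: the canonical map
\[
\colim_\iota R_\iota \longrightarrow \underline{R}
\]
is an isomorphism of $\underline{k}$-bands, where the colimit is taken along the inclusions $R_\jmath\subset R_\iota$ coming from toric morphisms. One must also check that the limit of the $Y_\iota$ in band schemes is affine and equals $\Spec$ of this colimit. For that I would use the anti-equivalence together with the fact that $\Spec$ is a right adjoint to $\Gamma$ on all band schemes, so limits of affine band schemes are affine. If this adjointness is not available as stated, I would simply interpret the limit in $\BAff$.

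First I show the index category is filtered (cofiltered on the scheme side). Given embeddings $\iota=(a_1,\dots,a_n)$ and $\jmath=(b_1,\dots,b_m)$, the concatenation $\iota\times\jmath=(a_1,\dots,a_n,b_1,\dots,b_m)$ is again a closed embedding. The coordinate projections are toric morphisms through which both $\iota$ and $\jmath$ factor. Moreover, between two given embeddings the induced map $R_\jmath\subset R_\iota$ is the inclusion of subsets of $R$, so parallel arrows agree on bands. Hence the diagram of bands is a filtered diagram of subbands of $\underline{R}$ ordered by inclusion.

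Next comes the computation of the colimit. For a filtered diagram of bands, I expect the colimit to have as underlying pointed monoid the filtered colimit of monoids, here the union $\bigcup_\iota R_\iota$. Its null set should be the union of the null sets. I would verify this directly from the universal property: a compatible family of morphisms out of the $R_\iota$ glues to a map of monoids on the union, and it preserves null-sums because every null-sum lies in some $R_\iota^+$. The additive inverses of the union are the ones inherited from the $R_\iota$, since $-1\in k$.

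It then remains to show two equalities. The first is $\bigcup_\iota R_\iota=R$. Any $r\in R$ can be appended to a fixed generating set $a_1,\dots,a_n$, giving an embedding $\iota'$ with $r\in R_{\iota'}$. The second is $\bigcup_\iota N_{R_\iota}=N_{\underline{R}}$. A null-sum $\sum r_i$ with $\sum r_i=0$ in $R$ involves finitely many elements, so I append all the $r_i$ to a generating set. Then $\sum r_i\in R_{\iota'}^+\cap N_{\underline R}=N_{R_{\iota'}}$. This gives $\colim R_\iota\cong\underline R$, and hence $X\cong\lim Y_\iota$. Naturality follows because the maps $\underline{X}\to Y_\iota$ are the ones induced by the inclusions.

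For points, $X(B)=\Hom_{\underline k}(\underline R,B)=\Hom(\colim R_\iota,B)=\lim\Hom(R_\iota,B)=\lim Y_\iota(B)$, which gives the bijection. For a topological band $B$ everything is affine, so the fine topology is the affine topology, i.e. the initial topology for the evaluations $ev_r$. The limit topology on $\lim Y_\iota(B)\subseteq\prod Y_\iota(B)$ is the initial topology for the maps $ev_r\circ\mathrm{pr}_\iota$ with $r\in R_\iota$. Since $R=\bigcup R_\iota$, these two families of evaluation maps coincide, so the bijection is a homeomorphism.

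The main obstacle I expect is the identification of the filtered colimit in $\Bands$ with the union, including its null set. In general, colimits of bands may require generating the null set as an ideal and may force identifications. Here the situation is clean, because all maps are inclusions into the fixed band $\underline R$ and $\bigcup N_{R_\iota}$ is already an ideal of $R^+$. To check the ideal property, given $s\in R^+$ and $n\in N_{R_\iota}$, I enlarge $\iota$ to contain the terms of $s$. I would also take care that the limit of band schemes is computed affinely, rather than as a limit in a larger category of locally banded spaces.
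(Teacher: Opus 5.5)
Your proposal is correct and follows the paper's proof. Both show that $\colim_\iota R_\iota\to\underline R$ is an isomorphism, because every element and every finite null-sum of $\underline R$ already lies in some $R_\iota$ and $N_{R_\iota}=N_{\underline R}\cap R_\iota^+$; both then dualize via the anti-equivalence $\Spec$ and pass to $B$-points. The only real difference is the topological part: you check directly that both sides carry the initial topology of the evaluations $ev_r$, $r\in R$, while the paper cites Lorscheid's Theorem 6.4. Also, the index category of embeddings need not literally be (co)filtered, since toric morphisms $\phi,\psi$ with $\phi\circ\iota=\psi\circ\iota$ need not be equalized by any further embedding. As you observe, this is harmless, because parallel arrows induce the same inclusion of subbands and the colimit only sees the resulting directed system.
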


\begin{proof}
By construction, the subbands \(R_\iota\subseteq \underline R\) are
cofinal among finite collections of elements of \(R\): given
\(r_1,\dots,r_s\in R\), we may extend them to a finite set of
\(k\)-algebra generators of \(R\), hence to an affine embedding \(\iota\),
so that \(r_1,\dots,r_s\in R_\iota\). Therefore every element of
\(\underline R\), and every finite null-sum of \(\underline R\), already
lies in some \(R_\iota\). Since \(N_{R_\iota}=N_{\underline R}\cap
R_\iota^+\), the canonical map
\[
    \colim_\iota R_\iota \longrightarrow \underline R
\]
is an isomorphism of bands.

Since $\Spec$ is a contravariant equivalence between bands and affine band schemes, 
it follows that
\[
X = \Spec(\underline{R})
\;\cong\;
\lim_{\iota}\Spec(R_\iota)
=
\lim_{\iota}Y_\iota .
\]

Now let $B$ be a \(\underline{k}\)-band. We obtain
\[
X(B)
=
\Hom(\Spec B,X)
\cong
\Hom(\Spec B,\lim_{\iota}Y_\iota)
\cong
\lim_{\iota}\Hom(\Spec B,Y_\iota)
=
\lim_{\iota}Y_\iota(B),
\]
which proves the claimed bijection.


If $B$ is a topological $\underline{k}$-band, then this bijection is a homeomorphism by a specialization of \cite[Thm. 6.4]{Lorscheid2023UnifyingTropicalization}
\end{proof}

This recovers Payne's limit theorem \cite{P09} by taking $\T$-points:
\begin{cor}
    In the case where $B=\mathbb{T}$, then $X^{an}\cong X(\mathbb{T})\cong \lim_{\iota}Y_\iota(\mathbb{T})
\;\cong\;
\lim_{\iota}\Trop(X,\iota).$
\end{cor}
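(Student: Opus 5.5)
The statement to prove is the Corollary: for $B=\T$ we get $X^{an}\cong X(\T)\cong\lim_\iota Y_\iota(\T)\cong\lim_\iota\Trop(X,\iota)$. The plan is to chain three homeomorphisms that are already available, and then check that they are compatible with the transition maps of the inverse systems.

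\textbf{Step 1.} The first identification $X^{an}\cong X(\T)=\Hom_{\underline k}(\underline R,\T)$ is the cited result \cite[Thm. 3.5]{L22}. It sends a multiplicative seminorm on $R$ extending the valuation of $k$ to the corresponding band morphism $\underline R\to\T$. The topologies agree because both are the initial topology for the evaluation maps $f\mapsto f(a)$, $a\in R$.

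\textbf{Step 2.} The second homeomorphism $X(\T)\cong\lim_\iota Y_\iota(\T)$ comes from Theorem~\ref{thm:mainthm} applied to the topological $\underline k$-band $\T$, with the order topology of Example~\ref{ex:topologicalbands}. Concretely, a compatible family of morphisms $R_\iota\to\T$ glues to a morphism on $\colim_\iota R_\iota=\underline R$.

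\textbf{Step 3.} For the third homeomorphism I would use $Y_\iota(\T)\cong\Trop(X,\iota)$ for each $\iota$, as recalled before Remark~\ref{rem: Bend relations}. Explicitly, a morphism $f\colon R_\iota\to\T$ over $\underline k$ is determined by the values $f(a_1),\dots,f(a_n)\in\R_{\geq 0}$, since $R_\iota$ is generated by the $a_i$ over $k$. The null-set condition $N_{R_\iota}=N_{\underline R}\cap R_\iota^+$ says exactly that for every polynomial relation $\sum c_e a^e=0$ in $R$, the maximum of $|c_e|\prod f(a_i)^{e_i}$ is attained at least twice. This is the tropical vanishing condition on the ideal of $\iota$. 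Taking $-\log$ of the coordinates gives the usual picture in $(\R\cup\{\infty\})^n$, and the evaluation topology matches the Euclidean/extended topology.

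\textbf{Step 4.} It remains to check that these identifications are natural in $\iota$. For a toric morphism $\phi$ with $\jmath=\phi\circ\iota$, the map $Y_\iota(\T)\to Y_\jmath(\T)$ is restriction along $R_\jmath\subset R_\iota$. Each coordinate of $\jmath$ is a monomial $c\,a^{e}$, so restriction sends $(f(a_i))_i$ to $\bigl(|c|\prod f(a_i)^{e_i}\bigr)$. Under $-\log$ this is exactly the affine-linear tropicalization $\Trop(\phi)$.

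The limits of the two inverse systems are therefore isomorphic as topological spaces. Composing the three homeomorphisms from Steps 1--3 yields the chain in the statement. The only delicate point is Step 3, namely that band morphisms from the monomial model are exactly the tropical points. I take this from \cite[Exa. 3.8]{https://doi.org/10.1112/jlms.70125} and check it directly via the fundamental theorem as described in Step 3.
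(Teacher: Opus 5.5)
Your proposal is correct and follows the paper's approach: it specializes Theorem~\ref{thm:mainthm} to $B=\T$ and composes the result with the cited identifications $X^{an}\cong X(\T)$ and $Y_\iota(\T)\cong\Trop(X,\iota)$. Your Step~3 unpacking and your Step~4 check that these identifications are compatible with the transition maps $\Trop(\phi)$ make explicit what the paper leaves implicit.
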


Taking $\mathbb{R}\mathbb{T}$-points recovers the real limit theorem of \cite{JSY22}:
\begin{cor}
Let $k$ be an ordered field endowed with a compatible non-Archimedean valuation, and let $X=\Spec R$ be an affine $k$-scheme of finite type. Then
\[
X_r^{\operatorname{an}}
\;\cong\;
X(\mathbb{R}\mathbb{T})
\;\cong\;
\lim_{\iota}Y_\iota(\mathbb{R}\mathbb{T}),
\]
and $Y_\iota(\mathbb{R}\mathbb{T})$ identifies with the real tropicalization of $X$ with respect to $\iota$.
\end{cor}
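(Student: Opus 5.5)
The last displayed statement is the corollary on $\mathbb{R}\mathbb{T}$-points, so the plan targets that. I would obtain it as a specialization of Theorem~\ref{thm:mainthm} with $B=\mathbb{R}\mathbb{T}$, in three steps. First, I would make $\mathbb{R}\mathbb{T}$ into a topological $\underline{k}$-band. By Example~\ref{thm:examplerealtropical}, the signed valuation $v^\pm\colon x\mapsto \sgn(x)\lvert x\rvert_k$ gives a band morphism $\underline{k}\to\mathbb{R}\mathbb{T}$, and this needs the compatibility of the order with the valuation. By Example~\ref{ex:topologicalbands}, $\mathbb{R}\mathbb{T}=\R$ with the Euclidean topology is a topological band. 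Theorem~\ref{thm:mainthm} then gives a homeomorphism $X(\mathbb{R}\mathbb{T})\cong\lim_\iota Y_\iota(\mathbb{R}\mathbb{T})$. Here $X(\mathbb{R}\mathbb{T})=\Hom_{\underline{k}}(\underline{R},\mathbb{R}\mathbb{T})$ carries the affine topology, and each $Y_\iota(\mathbb{R}\mathbb{T})$ carries the affine topology.

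Second, I would identify $X(\mathbb{R}\mathbb{T})$ with $X_r^{\operatorname{an}}$. A $\underline{k}$-band morphism $f\colon\underline{R}\to\mathbb{R}\mathbb{T}$ splits into $\lvert f\rvert\colon\underline{R}\to\mathbb{T}$ and a sign map $\sgn\circ f\colon\underline{R}\to\mathbb{S}$. The map $\lvert f\rvert$ is a $\mathbb{T}$-point, i.e.\ a multiplicative seminorm extending $\lvert\cdot\rvert_k$. By Example~\ref{thm:examplesign}, the sign map is an ordering on the residue field $\kappa(\mathfrak p)$, where $\mathfrak p$ is the support $f^{-1}(0)$; this uses that a sign map to $\mathbb{S}$ vanishing exactly on $\mathfrak p$ is an ordering of $\mathrm{Frac}(R/\mathfrak p)$.

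The null-set condition of $\mathbb{R}\mathbb{T}$ says that among the terms of any relation, those of maximal absolute value occur at least twice and with opposite signs. I would check that this is exactly the condition that the seminorm and the ordering are compatible: $0\le a\le b$ implies $\lvert a\rvert\le\lvert b\rvert$. Pairs (seminorm, compatible ordering) with common kernel are the points of the real analytification of \cite{JSY22}. The affine topology is the initial topology for the evaluations $f\mapsto f(a)\in\R$, and the topology of $X_r^{\operatorname{an}}$ is generated by the signed evaluations $\sgn_x(a)\lvert a\rvert_x$. These coincide, which gives a homeomorphism.

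Third, for the statement about $Y_\iota(\mathbb{R}\mathbb{T})$, the band $R_\iota$ is generated over $\underline k$ by $a_1,\dots,a_n$. A morphism $R_\iota\to\mathbb{R}\mathbb{T}$ is therefore determined by the tuple $(f(a_i))\in\R^n$. The admissible tuples are exactly those for which every polynomial relation in the ideal satisfies the signed tropical cancellation condition. This is the defining description of $\Trop_r(X,\iota)$ in \cite{JSY22}, via its characterization through signed tropical bases, or as the image of the real analytification.

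I expect the main obstacle to be this last identification, which needs the real fundamental theorem of \cite{JSY22}. It says that tuples satisfying all relation-wise conditions actually lift to points of $X_r^{\operatorname{an}}$, so that $Y_\iota(\mathbb{R}\mathbb{T})$ equals the image of $X(\mathbb{R}\mathbb{T})\to Y_\iota(\mathbb{R}\mathbb{T})$. I would cite this as in the $\mathbb{T}$ case, where \cite[Exa.~3.8]{https://doi.org/10.1112/jlms.70125} is used. I would also note that $k$ being real closed, as the paper assumes earlier, may be needed for this step.
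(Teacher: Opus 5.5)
\textbf{Steps one and two.} These are correct and follow the paper's route. The paper's corollary is Theorem~\ref{thm:mainthm} for $B=\mathbb{R}\mathbb{T}$, made a topological $\underline{k}$-band via $v^\pm$. You split $f$ into $\lvert f\rvert$ and $\sgn\circ f$ and observe that the null-set condition of $\mathbb{R}\mathbb{T}$ is exactly order-compatibility of the seminorm. This does give $X_r^{\operatorname{an}}\cong X(\mathbb{R}\mathbb{T})\cong\lim_\iota Y_\iota(\mathbb{R}\mathbb{T})$.

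\textbf{The gap is in step three.} You plan to cite a lifting statement: every tuple satisfying the signed cancellation condition for all polynomials of the ideal comes from $X_r^{\operatorname{an}}$. That is not the real fundamental theorem of \cite{JSY22}, which uses all inequalities valid on the set, not just the ideal. In this form it is false, even for $k$ real closed and $X$ with Zariski-dense real points. Here is a counterexample.
\begin{itemize}
\item Let $k$ be real closed with a nontrivial compatible valuation, $P=y+(x-1)^2$, $R=k[x,y]/(P)\cong k[x]$, and $\iota=(x,y)$.
\item By unique factorization in $k[x]$, the monoid $R_\iota$ is free on $x,y$ over $k$. So $c\,x^iy^j\mapsto v^\pm(c)$, i.e.\ $x\mapsto 1$, $y\mapsto 1$, is a well-defined multiplicative $\underline{k}$-map $R_\iota\to\mathbb{R}\mathbb{T}$.
\item It preserves null sums. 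Take $F=PQ\neq 0$, rescaled so that $Q$ has Gauss norm $1$. By Gauss's lemma the normalized reduction of $F$ is $\bar P\bar Q\in\kappa[x,y]$.
\item No nonzero multiple $G$ of $\bar P$ has all coefficients $\geq 0$. Otherwise $G(1,0)=0$ kills all coefficients of pure powers of $x$, so $y\mid G$; since $y\nmid\bar P$, also $y\mid\bar Q$, and one descends.
\item By compatibility, the coefficients of $F$ of maximal absolute value are those with nonzero reduction, and they carry the same signs. Hence the maximal terms of $F$ at $(1,1)$ occur with both signs. Null sums with repeated monomials reduce to this case.
\item So $(1,1)\in Y_\iota(\mathbb{R}\mathbb{T})$. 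But every $g\in X(\mathbb{R}\mathbb{T})$ has $g(y)=g(-1)\,g(x-1)^2=-g(x-1)^2\leq 0$. Thus $(1,1)$ is neither in the image of $X_r^{\operatorname{an}}$ nor in the closure of the signed valuations of real points of $X$.
\end{itemize}

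\textbf{Why it fails, and what survives.} $R_\iota$ contains only monomials, so it cannot see positivity of squares of non-monomial elements such as $(x-1)^2$. Thus $Y_\iota(\mathbb{R}\mathbb{T})$ is the signed tropical prevariety of the whole ideal. It is closed and contains $\Trop_r(X,\iota)$, but it can be strictly larger. The spurious points disappear only in the limit: adjoining $x-1$ as a generator forces $f(y)=-f(x-1)^2$.

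The analogy with the $\mathbb{T}$-case, where the fundamental theorem genuinely holds, breaks exactly here. The paper's ``analogous argument'' has the same defect, and assuming $k$ real closed does not fix it.

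Your argument does prove the displayed chain of isomorphisms. For the last clause, replace $Y_\iota(\mathbb{R}\mathbb{T})$ by the image of $X(\mathbb{R}\mathbb{T})\to Y_\iota(\mathbb{R}\mathbb{T})$. Since $X(\mathbb{R}\mathbb{T})\cong\lim_\iota Y_\iota(\mathbb{R}\mathbb{T})$, the limit of these images is still $X(\mathbb{R}\mathbb{T})$. In that form the limit theorem of \cite{JSY22} is recovered.
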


\begin{rem}
Theorem~\ref{thm:mainthm} can be applied to other topological bands as
well. For instance, applying it to the triangle bands \(\Delta_q\)
(cf. Example~\ref{ex:triangle}) identifies \(X(\Delta_q)\) with the
corresponding inverse limit of the spaces \(Y_\iota(\Delta_q)\). Since
\(\Delta_q\) is the bandification of the $q$-triangular hyperfield,
these spaces are closely related to the \(q\)-amoebas appearing in
Maslov dequantization (see
\cite[Section~3.5]{baker2025lorentzianpolynomialsmatroidstriangular}).

However, this relationship should not be interpreted as a literal identification
with the classical \(q\)-amoeba. For a fixed embedding \(\iota\), the
classical \(q\)-amoeba maps into the corresponding \(\Delta_q\)-valued
space considered in
\cite[Prop. 3.36]{baker2025lorentzianpolynomialsmatroidstriangular}, and this space may be
strictly larger; see also
\cite[Rem. 3.37]{baker2025lorentzianpolynomialsmatroidstriangular}. An interesting question to explore in a follow up work is to
understand more precisely to what extent \(X(\Delta_q)\) provides an
amoeba-theoretic analogue of the Berkovich analytification.
\end{rem}

\nocite{kuronya2025tropicalization}

 \printbibliography

\end{document}